\newtheorem{Theorem}{Theorem}[section] 
\newtheorem{Lemma}[Theorem]{Lemma} 
\newtheorem{Proposition}[Theorem]{Proposition} 
\newtheorem{Corollary}[Theorem]{Corollary} \numberwithin{equation}{section} 
\theoremstyle{definition}
\newtheorem{Example}[Theorem]{Example}
\title{Vector-Circulant Matrices over Finite
Fields and Related Codes} 
\author{Somphong~Jitman\thanks{S. Jitman is with the  Department of Mathematics, Faculty of Science, 
              Silpakorn University, Nakhon Pathom 73000,  Thailand
  (email: sjitman@gmail.com).} } 
\date{}
\renewcommand\footnotemark{}
\begin{document} 
\maketitle 
\begin{abstract}

	A vector-circulant matrix is a natural generalization of the
classical  circulant matrix and  has applications in constructing
 additive codes. This article formulates the concept of a
vector-circulant matrix over finite fields and gives  an algebraic
characterization for this kind of matrix.
  Finally,  a~construction of
additive codes with vector-circulant based over $\mathbb{F}_4$ is
given together with some examples of good half-rate additive
codes.
\end{abstract}

\section{Introduction}

  Let $q$ be a   power of a prime number and  $n$ be a
positive integer. Denote by $\mathbb{F}_q$ the finite field of
order $q $ and $M_n(\mathbb{F}_q)$ the $\mathbb{F}_q$-algebra  of
all $n\times n$ matrices whose entries are in~$\mathbb{F}_q$.
Given
  $\alpha\in \mathbb{F}_q\setminus \{0\}$, a matrix $A\in M_n(\mathbb{F}_q)$
is said to be $\alpha$-\textit{twistulant} [3] if \bigskip
\[ A=\left[%
\begin{array}{ccccc}
  a_0 & a_1 & \dots & a_{k-2}& a_{n-1} \\
   \alpha a_{n-1}  &  a_0  & \dots & a_{n-3}&   a_{n-2}   \\
   \alpha a_{n-2}  &  \alpha  a_{n-1}  & \dots & a_{n-4} &   a_{n-3}   \\
  \vdots & \vdots & \ddots & \vdots & \vdots \\
    \alpha a_1    &   \alpha a_2  & \dots &  \alpha a_{n-1}&   a_0  \\
\end{array}%
\right].\] Such a matrix is called \textit{circulant} [resp.,
\textit{negacirculant}] \textit{matrix} when $\alpha=1$ [resp.,
$\alpha=-1$]. The set of all $n\times n$ circulant [resp.,
$\alpha$-twistulant, \mbox{negacirculant}] matrices over
$\mathbb{F}_q$ is isomorphic to $\mathbb{F}_q[x]/\langle
x^n-1\rangle$ [resp., \mbox{$\mathbb{F}_q[x]/\langle
x^n-\alpha\rangle$,} $\mathbb{F}_q[x]/\langle x^n+1\rangle$] as
commutative algebras [3].

Circulant matrices over finite fields and their   well-known
generalizations in the notion of twistulant and negacirculant
\mbox{matrices} have
  widely been applied in many branches of Mathematics.
  Recently, they have been applied to construct
circulant based additive codes [5] and double circulant codes [4]
with optimal and extremal parameters.

In Section 2, we generalize the concept of circulant matrix over
the finite field~$\mathbb{F}_q$ and call it a vector-circulant
matrix. The algebraic structure of  the set of these matrices is
investigated. This generalization leads to a construction of
vector-circulant based additive codes over $\mathbb{F}_4$ in
Section 3. Examples are some optimal codes are also demonstrated
here.

\section{Vector-Circulant Matrices over Finite Fields}

\noindent Given a vector
$\boldsymbol{\lambda}=(\lambda_0,\lambda_1,\dots,\lambda_{n-1})\in
\mathbb{F}_q^n$,   let $\rho_{\boldsymbol{\lambda}}:
\mathbb{F}_q^n \rightarrow \mathbb{F}_q^n  $ be   defined by
\begin{eqnarray}\label{rho}
&\rho_{\boldsymbol{\lambda}}((v_0,v_1,\dots, v_{n-1})) &= (0,
v_0,v_1,\dots,
v_{n-2})+v_{n-1} \boldsymbol{\lambda}\notag\\
&&=(v_{n-1}\lambda_0,v_0+v_{n-1}\lambda_1,\dots,v_{n-2}+v_{n-1}\lambda_{n-1}).
\end{eqnarray}
The map $\rho_{\boldsymbol{\lambda}}$ is called the
$\boldsymbol{\lambda}$-\textit{vector-cyclic shift}
 on
$\mathbb{F}_q^n $.

A  matrix $A\in M_n(\mathbb{F}_q)$ is said to be
\textit{vector-circulant}, or specifically,
$\boldsymbol{\lambda}$-\textit{vector-circulant} if
\[A=\left[%
\begin{array}{r }
  a_0~~a_1 ~~ \cdots ~~ a_{n-1}~ \\
\rho_{\boldsymbol{\lambda}}(a_0~~a_1 ~~ \cdots ~~ a_{n-1} )\\
 \rho_{\boldsymbol{\lambda}} ^2(a_0~~a_1 ~~ \cdots ~~ a_{n-1} )\\
\vdots~~~~~~~~~~~~~\\
 \rho_{\boldsymbol{\lambda}} ^{n-1}(a_0~~a_1 ~~ \cdots ~~ a_{n-1} )\\
\end{array}%
\right]=:{\rm
cir}_{\boldsymbol{\lambda}}(a_0,a_1,\dots,a_{n-1}).\]  Clearly, a
 $ {\boldsymbol{\lambda}}$-{vector-circulant} matrix becomes  the
classical circulant    [resp.,  $\alpha$-twistualnt] matrix when $
\boldsymbol{\lambda} $ is the vector $(1,0,\dots,0)$ [resp,
$(\alpha,0,\dots,0)$ ].
\begin{Example}
Consider the finite field
$\mathbb{F}_4=\{0,1,\alpha,\alpha^2=1+\alpha\}$.   The matrices
\begin{eqnarray*}
 \left[%
\begin{array}{ccc}
  1 & \alpha & 0  \\
  0 & 1 & \alpha   \\
  \alpha & 0 & \alpha^2  \\
\end{array}%
\right]={\rm cir}_{(1,0,1)}(1,\alpha,0 ) \end{eqnarray*}  and
\begin{eqnarray*}
  \left[%
\begin{array}{cccc}
  1 & \alpha & 0 & \alpha \\
  \alpha^2  & 1 & \alpha  & \alpha \\
  \alpha^2 & \alpha & 1 & 0  \\
  0 & \alpha^2  & \alpha  & 1 \\
\end{array}%
\right]={\rm cir}_{(\alpha,0,0,1)}(1,\alpha,0,\alpha)
\end{eqnarray*} are  $3\times 3$ and $4\times 4$ vector-circulant matrices, respectively. They are obviously not circulant.
\end{Example}

From (\ref{rho}), it is easily verified that
$\rho_{\boldsymbol{\lambda}}$ is an $\mathbb{F}_q$-linear
transformation corresponding to
\[T_{\boldsymbol{\lambda}}= \left[%
\begin{array}{ccccc}
  0 & 1 & 0 & \dots & 0 \\
  0 & 0 & 1 & \dots & 0 \\
  \vdots& \vdots & \vdots & \ddots & \vdots \\
  0 & 0 & 0 & \dots & 1 \\
  \lambda_0 & \lambda_1 & \lambda_2 & \dots & \lambda_{n-1} \\
\end{array}%
\right],\] i.e., $\rho_{\boldsymbol{\lambda}}
({\boldsymbol{v}})={\boldsymbol{v}}T_{\boldsymbol{\lambda}}$, for
all $\boldsymbol{v}\in \mathbb{F}_q^n$. Consequently, for $1\leq
i$, $\rho_{\boldsymbol{\lambda}}^i$ is an $\mathbb{F}_q$-linear
transformation corresponding to $T_{\boldsymbol{\lambda}}^i$. Hence,
\begin{eqnarray}\label{sum}
  {\rm
cir}_{\boldsymbol{\lambda}}(a_0,a_1,\dots,a_{n-1})=
\displaystyle\sum_{i=0}^{n-1} a_i{\rm
cir}_{\boldsymbol{\lambda}}(E_{i+1}) ,
\end{eqnarray}
where   $E_i=(0,\dots,0,\underbrace{1}_{i^{th}},0,\dots,0)$, for
$1\leq i\leq n$.

Observe that $T_{\boldsymbol{\lambda}}$ need not be invertible.
For $\boldsymbol{\lambda}=(\lambda_0,
\lambda_1,\dots,\lambda_{n-1})$, the singularity of
$T_{\boldsymbol{\lambda}}$ depends on ${{\lambda}_0}$. By applying
a suitable sequence of elementary row operations,
$T_{\boldsymbol{\lambda}}$ is equivalent to  an $n\times n$
diagonal matrix ${\rm diag}(\lambda_0,1,1,\dots,1)$. Then the next
proposition follows.
\begin{Proposition}  $T_{(\lambda_0,\lambda_1,\dots,\lambda_{n-1})}$ is
invertible if and only if $\lambda_0\neq 0$.
\end{Proposition}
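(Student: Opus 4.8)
The plan is to reduce the claim to a determinant computation, since over a field a square matrix is invertible precisely when its determinant is nonzero. The structure of $T_{\boldsymbol{\lambda}}$ is almost that of a companion matrix: each of its first $n-1$ rows is a shifted standard basis vector (the $i$-th row carries a single $1$ in column $i+1$), and only the last row holds the coefficients $\lambda_0,\dots,\lambda_{n-1}$. This shape makes the determinant trivial to evaluate by a single cofactor.

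First I would expand $\det T_{\boldsymbol{\lambda}}$ along the first column. In that column every entry vanishes except the $(n,1)$-entry, which equals $\lambda_0$. Hence exactly one cofactor survives, giving $\det T_{\boldsymbol{\lambda}} = (-1)^{n+1}\lambda_0\, M$, where $M$ is the minor obtained by deleting the last row and the first column. Next I would identify $M$: deleting row $n$ and column $1$ leaves the block formed by rows $1,\dots,n-1$ and columns $2,\dots,n$, and by the shift structure this $(n-1)\times(n-1)$ block is precisely the identity, so $M=1$. Therefore $\det T_{\boldsymbol{\lambda}} = (-1)^{n+1}\lambda_0$.

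Finally, since $(-1)^{n+1}\in\{1,-1\}$ is a unit in $\mathbb{F}_q$, the determinant is zero if and only if $\lambda_0=0$, which yields the claim. I expect no genuine obstacle; the only point demanding a little care is verifying that the surviving minor really is the identity, but this is immediate from the form of $T_{\boldsymbol{\lambda}}$. An equivalent route is the one sketched just before the statement: use the first $n-1$ rows to clear $\lambda_1,\dots,\lambda_{n-1}$ from the last row, then permute rows to reach ${\rm diag}(\lambda_0,1,\dots,1)$, whose invertibility is controlled solely by $\lambda_0$; since elementary operations preserve rank, the same conclusion follows.
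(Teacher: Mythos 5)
Your proposal is correct. It is worth noting how it relates to the paper's own proof, which consists entirely of the sentences preceding the Proposition: the paper never computes a determinant, but simply observes that a suitable sequence of elementary row operations reduces $T_{\boldsymbol{\lambda}}$ to ${\rm diag}(\lambda_0,1,1,\dots,1)$, whence invertibility holds exactly when $\lambda_0\neq 0$. Your lead argument is instead a cofactor expansion along the first column, whose only nonzero entry is the $(n,1)$-entry $\lambda_0$; the surviving minor is $I_{n-1}$ because row $i$ of $T_{\boldsymbol{\lambda}}$ (for $1\le i\le n-1$) has its unique $1$ in column $i+1$, so $\det T_{\boldsymbol{\lambda}}=(-1)^{n+1}\lambda_0$ and the claim follows. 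This is a genuinely different (though equally routine) route, and it buys a little more: an explicit determinant formula, which for instance shows at once that over any commutative ring $T_{\boldsymbol{\lambda}}$ is invertible precisely when $\lambda_0$ is a unit, a statement the rank-over-a-field phrasing does not deliver as cleanly. Your closing alternative---use rows $1,\dots,n-1$ to clear $\lambda_1,\dots,\lambda_{n-1}$ from the last row, then permute rows to reach ${\rm diag}(\lambda_0,1,\dots,1)$---is precisely the paper's argument, so your writeup in fact subsumes the paper's proof; the only detail the paper leaves implicit, and which you correctly fill in, is which row operations accomplish the reduction.
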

The set of all $n\times n$ $
{\boldsymbol{\lambda}}$-vector-circulant matrices
over~$\mathbb{F}_q$ is denoted by ${\rm
Cir}_{n,{\boldsymbol{\lambda}}}(\mathbb{F}_q)$. Consider
$M_n(\mathbb{F}_q)$ as an algebra over $\mathbb{F}_q$, our goal is
to show that $ {\rm Cir}_{n,{\boldsymbol{\lambda}}}(\mathbb{F}_q)$
is an abelian subalgebra of $M_n(\mathbb{F}_q)$. It follows
directly from the linearity of $\rho_{\boldsymbol{\lambda}}$ that
$ {\rm Cir}_{n,{\boldsymbol{\lambda}}}(\mathbb{F}_q)$ is a
subspace of the  $\mathbb{F}_q$-vector space  $M_n(\mathbb{F}_q)$.
Moreover, by application of (\ref{rho}), the set $\{ {\rm
cir}_{\boldsymbol{\lambda}}(E_1), {\rm
cir}_{\boldsymbol{\lambda}}(E_2), \dots, {\rm
cir}_{\boldsymbol{\lambda}}(E_{n})\}$ can be verified to be a
basis of $ {\rm Cir}_{n,{\boldsymbol{\lambda}}}(\mathbb{F}_q)$. To
prove that $ {\rm Cir}_{n,{\boldsymbol{\lambda}}}(\mathbb{F}_q)$
is a commutative ring, we need the following lemma and corollary.
For convenience, let $T_{\boldsymbol{\lambda}}^0$ denote the
identity matrix $I_n$.

\begin{Lemma}\label{lem:2.4}
\begin{enumerate}[$i)$]
\item $ T_{\boldsymbol{\lambda}}^{m}={\rm
 cir}_{\boldsymbol{\lambda}}(\rho_{\boldsymbol{\lambda}}^m(E_{1}))$, for all integers $  0\leq m $.
\item $T_{\boldsymbol{\lambda}}^{i} = {\rm cir}_{
\boldsymbol{\lambda}}(E_{i+1})$, for all   $0\leq i< n$.
\end{enumerate}
\end{Lemma}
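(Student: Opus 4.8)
The plan is to prove both parts by identifying the matrices one row at a time, after first isolating the single computational fact that drives everything: the vector-cyclic shift $\rho_{\boldsymbol{\lambda}}$ acts as an ordinary coordinate shift on the first $n-1$ standard basis vectors. Precisely, for $1\le k\le n-1$ the vector $E_k$ has last coordinate $0$, so formula (\ref{rho}) gives $\rho_{\boldsymbol{\lambda}}(E_k)=E_{k+1}$ with no contribution from $\boldsymbol{\lambda}$. Iterating this, I obtain
\[
\rho_{\boldsymbol{\lambda}}^{\,j}(E_1)=E_{j+1},\qquad 0\le j\le n-1,
\]
which I will record by a one-line induction on $j$. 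The induction is forced to stop at $j=n-1$, since one further application of $\rho_{\boldsymbol{\lambda}}$ sends $E_n$ to $\boldsymbol{\lambda}$ rather than to a basis vector.

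For part $i)$ I would compare the $j$-th rows of the two sides. By the correspondence $\rho_{\boldsymbol{\lambda}}^{\,m}(\boldsymbol{v})=\boldsymbol{v}\,T_{\boldsymbol{\lambda}}^{\,m}$ recorded in the excerpt, the $j$-th row of $T_{\boldsymbol{\lambda}}^{\,m}$ is $E_j T_{\boldsymbol{\lambda}}^{\,m}=\rho_{\boldsymbol{\lambda}}^{\,m}(E_j)$; writing $E_j=\rho_{\boldsymbol{\lambda}}^{\,j-1}(E_1)$ via the key fact, this row equals $\rho_{\boldsymbol{\lambda}}^{\,m+j-1}(E_1)$. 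On the other side, the definition of ${\rm cir}_{\boldsymbol{\lambda}}$ makes the $j$-th row of ${\rm cir}_{\boldsymbol{\lambda}}(\rho_{\boldsymbol{\lambda}}^{\,m}(E_1))$ equal to $\rho_{\boldsymbol{\lambda}}^{\,j-1}(\rho_{\boldsymbol{\lambda}}^{\,m}(E_1))=\rho_{\boldsymbol{\lambda}}^{\,m+j-1}(E_1)$. Since these agree for every $1\le j\le n$, the matrices coincide. As an alternative I could induct on $m$: the base case $m=0$ reduces to the observation that ${\rm cir}_{\boldsymbol{\lambda}}(E_1)=I_n$, and the inductive step multiplies by $T_{\boldsymbol{\lambda}}$, which advances each row by one application of $\rho_{\boldsymbol{\lambda}}$.

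Part $ii)$ then follows immediately: specializing part $i)$ to an exponent $i$ with $0\le i<n$ gives $T_{\boldsymbol{\lambda}}^{\,i}={\rm cir}_{\boldsymbol{\lambda}}(\rho_{\boldsymbol{\lambda}}^{\,i}(E_1))$, and the key fact identifies $\rho_{\boldsymbol{\lambda}}^{\,i}(E_1)=E_{i+1}$ in exactly this range, so $T_{\boldsymbol{\lambda}}^{\,i}={\rm cir}_{\boldsymbol{\lambda}}(E_{i+1})$.

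There is no deep obstacle here; the content is essentially bookkeeping. The one point that genuinely requires care is the index range in the key fact: the reduction of $\rho_{\boldsymbol{\lambda}}$ to a plain shift is valid only while the last coordinate remains $0$, that is, only on $E_1,\dots,E_{n-1}$, which is precisely why part $ii)$ is restricted to $i<n$ whereas part $i)$ holds for all $m\ge 0$. I would therefore take particular care with the boundary cases $m=0$ and $j=n$, where an off-by-one slip would most easily arise.
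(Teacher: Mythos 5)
Your proof is correct, and for part $i)$ it takes a genuinely different route from the paper. The paper proves $i)$ by induction on $m$: the inductive step writes $T_{\boldsymbol{\lambda}}^{k+1}={\rm cir}_{\boldsymbol{\lambda}}(\rho_{\boldsymbol{\lambda}}^k(E_1))\,T_{\boldsymbol{\lambda}}$ and uses the fact that right multiplication by $T_{\boldsymbol{\lambda}}$ applies $\rho_{\boldsymbol{\lambda}}$ to each row, turning the row list $\rho_{\boldsymbol{\lambda}}^k(E_1),\dots,\rho_{\boldsymbol{\lambda}}^{k+n-1}(E_1)$ into $\rho_{\boldsymbol{\lambda}}^{k+1}(E_1),\dots,\rho_{\boldsymbol{\lambda}}^{k+n}(E_1)$. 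You instead identify the two matrices row by row without any induction on $m$: the $j$-th row of $T_{\boldsymbol{\lambda}}^m$ is $E_jT_{\boldsymbol{\lambda}}^m=\rho_{\boldsymbol{\lambda}}^m(E_j)=\rho_{\boldsymbol{\lambda}}^{m+j-1}(E_1)$, which is exactly the $j$-th row of ${\rm cir}_{\boldsymbol{\lambda}}(\rho_{\boldsymbol{\lambda}}^m(E_1))$; the only inductive content is the one-line fact $\rho_{\boldsymbol{\lambda}}^{\,j}(E_1)=E_{j+1}$ for $0\le j\le n-1$, which is the same key fact the paper invokes (only) for part $ii)$. Your route is shorter and makes transparent why $ii)$ must be restricted to $i<n$ while $i)$ holds for all $m\ge 0$ (the shift-without-wrap identification breaks exactly when the last coordinate becomes nonzero); the paper's inductive route has the advantage of exercising the multiplicative identity ${\rm cir}_{\boldsymbol{\lambda}}(\boldsymbol{a})T_{\boldsymbol{\lambda}}={\rm cir}_{\boldsymbol{\lambda}}(\rho_{\boldsymbol{\lambda}}(\boldsymbol{a}))$, which foreshadows the closure argument $T^iT^j=T^{i+j}$ used in Theorem \ref{thm:2.8}. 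Your noted alternative (induction on $m$ with base case ${\rm cir}_{\boldsymbol{\lambda}}(E_1)=I_n$) is precisely the paper's proof, and your part $ii)$ coincides with the paper's.
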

\begin{proof}
First, we prove $i)$  by induction on $m$. By the definition,
$T_{\boldsymbol{\lambda}}^0=I_n={\rm
 cir}_{\boldsymbol{\lambda}}(\rho_{\boldsymbol{\lambda}}^0(E_{1}))$.
 Clearly, $T_{\boldsymbol{\lambda}}  ={\rm
 cir}_{\boldsymbol{\lambda}}(\rho_{\boldsymbol{\lambda}}
 (E_{1}))$. Assume that $T_{\boldsymbol{\lambda}} ^k ={\rm
 cir}_{\boldsymbol{\lambda}}(\rho_{\boldsymbol{\lambda}}^k
 (E_{1}))$ for all positive integers $k<m$.
 Then
 \begin{eqnarray*}
&T_{\boldsymbol{\lambda}} ^{k+1} &= {\rm
cir}_{\boldsymbol{\lambda}}(\rho_{\boldsymbol{\lambda}}^k
 (E_{1}))T_{\boldsymbol{\lambda}}\\
 &&=\left[%
\begin{array}{c}
 \rho_{\boldsymbol{\lambda}}^k
 (E_{1})   \\
 \rho_{\boldsymbol{\lambda}}^{k+1}
 (E_{1})   \\
  \vdots  \\
  \rho_{\boldsymbol{\lambda}}^{k+n-1}
 (E_{1})  \\
\end{array}%
\right]T_{\boldsymbol{\lambda}}\\
&&=\left[%
\begin{array}{c}
 \rho_{\boldsymbol{\lambda}}^{k+1}
 (E_{1})   \\
 \rho_{\boldsymbol{\lambda}}^{k+2}
 (E_{1})   \\
  \vdots  \\
  \rho_{\boldsymbol{\lambda}}^{k+n}
 (E_{1})  \\
\end{array}
\right] \\
&&={\rm
cir}_{\boldsymbol{\lambda}}(\rho_{\boldsymbol{\lambda}}^{k+1}
 (E_{1})).
 \end{eqnarray*}
 Hence $i)$ is proved.

 Note that,  for all $1\leq i\leq
  n$, $\rho_{ \boldsymbol{\lambda}}^{i-1}(E_1)=E_i$. Hence,     $ii)$ follows immediately from $i)$.
\end{proof}
The next corollary is a direct consequence of Lemma \ref{lem:2.4}.
\begin{Corollary}\label{cor:2.6}
  $T_{\boldsymbol{\lambda}}^m\in {\rm
Cir}_{n,\boldsymbol{\lambda}}(\mathbb{F}_q)$, for all   $0\leq m$.
\end{Corollary}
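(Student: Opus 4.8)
The plan is to obtain this corollary by directly reading off part $i)$ of Lemma~\ref{lem:2.4}, exploiting the fact that ${\rm Cir}_{n,\boldsymbol{\lambda}}(\mathbb{F}_q)$ was defined to be the set of \emph{all} matrices of the form ${\rm cir}_{\boldsymbol{\lambda}}(a_0,\dots,a_{n-1})$. First I would fix an arbitrary integer $m\geq 0$ and abbreviate $(b_0,b_1,\dots,b_{n-1}):=\rho_{\boldsymbol{\lambda}}^{m}(E_1)\in\mathbb{F}_q^n$, which is a genuine vector of $\mathbb{F}_q^n$ since $\rho_{\boldsymbol{\lambda}}$ maps $\mathbb{F}_q^n$ into itself. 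Part $i)$ of Lemma~\ref{lem:2.4} then yields $T_{\boldsymbol{\lambda}}^{m}={\rm cir}_{\boldsymbol{\lambda}}(b_0,b_1,\dots,b_{n-1})$. The right-hand side is, by construction, a $\boldsymbol{\lambda}$-vector-circulant matrix, so it belongs to ${\rm Cir}_{n,\boldsymbol{\lambda}}(\mathbb{F}_q)$ by the very definition of that set; this gives the claim for every $m\geq 0$.

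There is essentially no obstacle here, since all of the substantive work has already been carried out in the inductive proof of Lemma~\ref{lem:2.4}$i)$: the identity $T_{\boldsymbol{\lambda}}^{m}={\rm cir}_{\boldsymbol{\lambda}}(\rho_{\boldsymbol{\lambda}}^{m}(E_1))$ is precisely the statement that each power of $T_{\boldsymbol{\lambda}}$ \emph{is} a vector-circulant matrix, and the corollary merely repackages this by invoking the definition of the ambient set. The only point worth checking is that the stated range $0\leq m$ matches the range in the lemma, which it does. Alternatively, one could avoid quoting part $i)$ by observing that $T_{\boldsymbol{\lambda}}={\rm cir}_{\boldsymbol{\lambda}}(E_2)$ (the case $i=1$ of part $ii)$) is itself vector-circulant and then arguing that ${\rm Cir}_{n,\boldsymbol{\lambda}}(\mathbb{F}_q)$ is closed under right multiplication by $T_{\boldsymbol{\lambda}}$, inducting on $m$; but this closure is not yet available at this stage of the paper, so the clean route is the one-line appeal to Lemma~\ref{lem:2.4}$i)$ described above.
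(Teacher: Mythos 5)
Your proposal is correct and is exactly the paper's (implicit) argument: the paper simply declares the corollary ``a direct consequence of Lemma~\ref{lem:2.4},'' meaning precisely your one-line reading of part $i)$, namely that $T_{\boldsymbol{\lambda}}^{m}={\rm cir}_{\boldsymbol{\lambda}}(\rho_{\boldsymbol{\lambda}}^{m}(E_1))$ exhibits $T_{\boldsymbol{\lambda}}^{m}$ as a vector-circulant matrix, hence an element of ${\rm Cir}_{n,\boldsymbol{\lambda}}(\mathbb{F}_q)$ by definition. Your remark that the alternative induction via closure under multiplication is unavailable at this stage is also well taken, since that closure (Theorem~\ref{thm:2.8}) is proved only later and itself relies on this corollary.
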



\begin{Theorem}\label{thm:2.8}  $ {\rm Cir}_{n,{\boldsymbol{\lambda}}}(\mathbb{F}_q) $ is
a  commutative  subring of $M_n(\mathbb{F}_q)$.
\end{Theorem}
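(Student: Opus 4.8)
The plan is to recognize $\mathrm{Cir}_{n,\boldsymbol{\lambda}}(\mathbb{F}_q)$ as the set of polynomial expressions in the single matrix $T_{\boldsymbol{\lambda}}$; once this identification is made, commutativity and closure under multiplication follow from the elementary fact that powers of a fixed matrix commute with one another. First I would combine (\ref{sum}) with part $ii)$ of Lemma~\ref{lem:2.4}. Since $\mathrm{cir}_{\boldsymbol{\lambda}}(E_{i+1})=T_{\boldsymbol{\lambda}}^{i}$ for $0\le i<n$, this rewrites an arbitrary vector-circulant matrix as
\[
\mathrm{cir}_{\boldsymbol{\lambda}}(a_0,a_1,\dots,a_{n-1})=\sum_{i=0}^{n-1}a_i\,T_{\boldsymbol{\lambda}}^{i},
\]
exhibiting every element of $\mathrm{Cir}_{n,\boldsymbol{\lambda}}(\mathbb{F}_q)$ as a polynomial of degree at most $n-1$ in $T_{\boldsymbol{\lambda}}$.

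From the discussion preceding Lemma~\ref{lem:2.4} we already know that $\mathrm{Cir}_{n,\boldsymbol{\lambda}}(\mathbb{F}_q)$ is an $\mathbb{F}_q$-subspace, and it contains the identity $I_n=T_{\boldsymbol{\lambda}}^{0}=\mathrm{cir}_{\boldsymbol{\lambda}}(E_1)$, so the only remaining tasks are to verify closure under multiplication and commutativity. For two typical elements $A=\sum_{i=0}^{n-1}a_iT_{\boldsymbol{\lambda}}^{i}$ and $B=\sum_{j=0}^{n-1}b_jT_{\boldsymbol{\lambda}}^{j}$, I would expand the product as
\[
AB=\sum_{i=0}^{n-1}\sum_{j=0}^{n-1}a_ib_j\,T_{\boldsymbol{\lambda}}^{i+j}=BA,
\]
where the equality $AB=BA$ holds term by term because the coefficients $a_i,b_j$ are scalars and the powers $T_{\boldsymbol{\lambda}}^{i+j}$ commute among themselves.

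The main, and essentially only, obstacle is closure: the exponents $i+j$ can be as large as $2n-2$, so part $ii)$ of Lemma~\ref{lem:2.4} no longer expresses $T_{\boldsymbol{\lambda}}^{i+j}$ as a vector-circulant matrix of degree below $n$ directly. This is precisely the role of Corollary~\ref{cor:2.6}, which certifies that $T_{\boldsymbol{\lambda}}^{m}\in\mathrm{Cir}_{n,\boldsymbol{\lambda}}(\mathbb{F}_q)$ for every $m\ge 0$. Consequently each summand $a_ib_j\,T_{\boldsymbol{\lambda}}^{i+j}$ of $AB$ lies in $\mathrm{Cir}_{n,\boldsymbol{\lambda}}(\mathbb{F}_q)$, and since this set is a subspace, so does their finite sum. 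Combining closure, commutativity, and the presence of the identity then yields that $\mathrm{Cir}_{n,\boldsymbol{\lambda}}(\mathbb{F}_q)$ is a commutative subring of $M_n(\mathbb{F}_q)$, completing the argument.
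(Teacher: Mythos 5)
Your proposal is correct and follows essentially the same route as the paper: both rewrite an arbitrary element as $\sum_{i=0}^{n-1} a_i T_{\boldsymbol{\lambda}}^{i}$ via (\ref{sum}) and Lemma~\ref{lem:2.4}, expand the product into $\sum_{i}\sum_{j} a_i b_j T_{\boldsymbol{\lambda}}^{i+j}$, invoke Corollary~\ref{cor:2.6} together with the subspace property to obtain closure, and read off commutativity from the symmetry of that expression. No gaps; your treatment of the exponents $i+j$ exceeding $n-1$ matches exactly the role Corollary~\ref{cor:2.6} plays in the paper's argument.
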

\begin{proof} Since $ ({\rm Cir}_{n,{\boldsymbol{\lambda}}}(\mathbb{F}_q),+) $ is an additive subgroup of $M_n(\mathbb{F}_q)$ containing $I_n= T^0_{\boldsymbol{\lambda}}$, it is sufficient to show that   $ {\rm
Cir}_{n,\boldsymbol{\lambda}}(\mathbb{F}_q) $ is closed under the
usual multiplication of matrices.  Let $(a_0,a_1,\dots, a_{n-1}),$
$(b_0,b_1,\dots, b_{n-1})\in \mathbb{F}_q^n$. Then
\begin{eqnarray}\label{eq:mul}
&{\rm cir}_{\boldsymbol{\lambda}}(a_0,a_1,\dots, a_{n-1}){\rm
cir}_{\boldsymbol{\lambda}}(b_0,b_1,\dots, b_{n-1}
&=\left(\displaystyle\sum_{i=0}^{n-1}a_i{\rm cir}_{\boldsymbol{\lambda}}(E_{i+1}) \right)\left(\displaystyle\sum_{j=0}^{n-1}b_j{\rm cir}_{\boldsymbol{\lambda}}(E_{j+1}) \right), \notag\\
&&~~~~~~~~~~~~~~~~~~~~~~~~~~~~~ \text{by (\ref{sum}),}\notag\\
&&=\displaystyle\sum_{i=0}^{n-1}\displaystyle\sum_{j=0}^{n-1}a_ib_j
{{\rm
 cir}_{\boldsymbol{\lambda}}(E_{i+1})\rm
 cir}_{\boldsymbol{\lambda}}(E_{j+1})\notag\\
 &&=\displaystyle\displaystyle\sum_{i=0}^{n-1}\displaystyle\displaystyle\sum_{j=0}^{n-1}a_ib_j T^iT^j, ~~~\text{by Lemma \ref{lem:2.4},} \notag\\
 &&=\displaystyle\displaystyle\sum_{i=0}^{n-1}\displaystyle\displaystyle\sum_{j=0}^{n-1}a_ib_j T^{i+j}\\
 &&\in {\rm Cir}_{n,{\boldsymbol{\lambda}}}(\mathbb{F}_q) ,
 ~~~~~~~~~~~~\text{by Corollary \ref{cor:2.6}}.\notag
\end{eqnarray}
From (\ref{eq:mul}), the commutativity is obvious.
\end{proof}


\begin{Corollary}
$ {\rm Cir}_{n,\boldsymbol{\lambda}}(\mathbb{F}_q) $   is a
commutative algebra.
\end{Corollary}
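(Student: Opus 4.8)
The plan is to use the fact that an $\mathbb{F}_q$-algebra is precisely an $\mathbb{F}_q$-vector space equipped with an associative, bilinear, unital multiplication, and to observe that every one of these features has either already been verified or is inherited from the ambient algebra $M_n(\mathbb{F}_q)$. First I would simply collect the two facts already in hand. The discussion preceding Lemma \ref{lem:2.4} establishes that ${\rm Cir}_{n,\boldsymbol{\lambda}}(\mathbb{F}_q)$ is an $\mathbb{F}_q$-subspace of $M_n(\mathbb{F}_q)$, with explicit basis $\{{\rm cir}_{\boldsymbol{\lambda}}(E_1),\dots,{\rm cir}_{\boldsymbol{\lambda}}(E_n)\}$; and Theorem \ref{thm:2.8} establishes that it is a commutative subring of $M_n(\mathbb{F}_q)$ containing $I_n$. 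Thus the additive-group, scalar-multiplication, and ring structure are all in place, and the multiplication is already known to be commutative.

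The only remaining axiom to check is the compatibility between the scalar action and the matrix product, namely $c(AB)=(cA)B=A(cB)$ for all $c\in\mathbb{F}_q$ and all $A,B\in{\rm Cir}_{n,\boldsymbol{\lambda}}(\mathbb{F}_q)$. I would dispatch this by appealing to the fact that $M_n(\mathbb{F}_q)$ is itself an $\mathbb{F}_q$-algebra: the identity holds for arbitrary matrices in $M_n(\mathbb{F}_q)$ and therefore, a fortiori, for the vector-circulant ones. Consequently ${\rm Cir}_{n,\boldsymbol{\lambda}}(\mathbb{F}_q)$ satisfies every axiom of an $\mathbb{F}_q$-algebra, and by Theorem \ref{thm:2.8} that algebra is commutative, which is exactly the assertion.

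The step I expect to be least interesting is precisely the one that might superficially look like the main content, namely verifying bilinearity and associativity of the product; in fact there is no genuine obstacle, because these properties descend automatically from $M_n(\mathbb{F}_q)$ to any subset that is simultaneously a subspace and a subring. The whole weight of the corollary therefore rests on the two previously proved statements, and the present argument is purely a matter of assembling them: being a subalgebra is nothing more than being both an $\mathbb{F}_q$-subspace and a subring of an ambient $\mathbb{F}_q$-algebra.
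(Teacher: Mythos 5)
Your proposal is correct and follows essentially the same route as the paper, which likewise deduces the corollary by combining the previously established $\mathbb{F}_q$-vector space structure of ${\rm Cir}_{n,\boldsymbol{\lambda}}(\mathbb{F}_q)$ with Theorem \ref{thm:2.8}. Your only addition is to make explicit the (routine) compatibility of scalars with the product, which is inherited from $M_n(\mathbb{F}_q)$ and left implicit in the paper.
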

\begin{proof}
  It follows from the fact that $ {\rm Cir}_{n,\boldsymbol{\lambda}}(\mathbb{F}_q)
  $ is an $\mathbb{F}_q$-vector space and Theorem \ref{thm:2.8}.
\end{proof}

 For
$\boldsymbol{\lambda}=(\lambda_0,\lambda_1,\dots,\lambda_{n-1})$,
let $ \boldsymbol{\lambda}(x)= \lambda_0+\lambda_1
x+\dots+\lambda_{n-1} x^{n-1}$ be the corresponding polynomial
representation of  $ \boldsymbol{\lambda}$.

The next theorem is an algebraic characterization of $ {\rm
Cir}_{n,\boldsymbol{\lambda}}(\mathbb{F}_q) $.

\begin{Theorem}
 $ {\rm Cir}_{n,\boldsymbol{\lambda}}(\mathbb{F}_q) $   is isomorphic to $\mathbb{F}_q[x]/\langle
x^n-\boldsymbol{\lambda}(x)\rangle$ as  commutative algebras.
\end{Theorem}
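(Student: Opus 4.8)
The plan is to realize $ {\rm Cir}_{n,\boldsymbol{\lambda}}(\mathbb{F}_q)$ as a quotient of the polynomial algebra via the evaluation map and then invoke the first isomorphism theorem. Concretely, I would consider the $\mathbb{F}_q$-algebra homomorphism $\varphi\colon \mathbb{F}_q[x]\to M_n(\mathbb{F}_q)$ defined by $\varphi(f(x))=f(T_{\boldsymbol{\lambda}})$, where the polynomial is evaluated at the matrix $T_{\boldsymbol{\lambda}}$. Since substitution of a fixed matrix into polynomials respects addition, scalar multiplication, and the product, and sends $1$ to $I_n$, the map $\varphi$ is automatically a homomorphism of commutative $\mathbb{F}_q$-algebras. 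It then remains to compute the image and the kernel of $\varphi$.

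For the image, Corollary~\ref{cor:2.6} gives $T_{\boldsymbol{\lambda}}^m\in {\rm Cir}_{n,\boldsymbol{\lambda}}(\mathbb{F}_q)$ for every $m\ge 0$, so every value $f(T_{\boldsymbol{\lambda}})$ lies in $ {\rm Cir}_{n,\boldsymbol{\lambda}}(\mathbb{F}_q)$ and ${\rm im}\,\varphi\subseteq {\rm Cir}_{n,\boldsymbol{\lambda}}(\mathbb{F}_q)$. Conversely, by Lemma~\ref{lem:2.4}$(ii)$ the basis $\{{\rm cir}_{\boldsymbol{\lambda}}(E_1),\dots,{\rm cir}_{\boldsymbol{\lambda}}(E_n)\}$ of $ {\rm Cir}_{n,\boldsymbol{\lambda}}(\mathbb{F}_q)$ equals $\{T_{\boldsymbol{\lambda}}^0,\dots,T_{\boldsymbol{\lambda}}^{n-1}\}=\{\varphi(1),\dots,\varphi(x^{n-1})\}$, so each basis vector, and hence all of $ {\rm Cir}_{n,\boldsymbol{\lambda}}(\mathbb{F}_q)$, lies in ${\rm im}\,\varphi$. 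Thus ${\rm im}\,\varphi= {\rm Cir}_{n,\boldsymbol{\lambda}}(\mathbb{F}_q)$.

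The crux is the kernel, which I would identify as the principal ideal $\langle x^n-\boldsymbol{\lambda}(x)\rangle$ by showing that $x^n-\boldsymbol{\lambda}(x)$ is the minimal polynomial of $T_{\boldsymbol{\lambda}}$. First, $x^n-\boldsymbol{\lambda}(x)$ annihilates $T_{\boldsymbol{\lambda}}$: by Lemma~\ref{lem:2.4}$(i)$ one has $T_{\boldsymbol{\lambda}}^n={\rm cir}_{\boldsymbol{\lambda}}(\rho_{\boldsymbol{\lambda}}^n(E_1))$, and since $\rho_{\boldsymbol{\lambda}}^{n-1}(E_1)=E_n$ a direct application of (\ref{rho}) gives $\rho_{\boldsymbol{\lambda}}^n(E_1)=\rho_{\boldsymbol{\lambda}}(E_n)=\boldsymbol{\lambda}$; therefore $T_{\boldsymbol{\lambda}}^n={\rm cir}_{\boldsymbol{\lambda}}(\lambda_0,\dots,\lambda_{n-1})=\sum_{i=0}^{n-1}\lambda_i T_{\boldsymbol{\lambda}}^i=\boldsymbol{\lambda}(T_{\boldsymbol{\lambda}})$, i.e. $\varphi(x^n-\boldsymbol{\lambda}(x))=0$. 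Second, this polynomial has minimal degree among nonzero annihilators: a relation $\sum_{i=0}^{n-1}c_i T_{\boldsymbol{\lambda}}^i=0$ reads $\sum_{i=0}^{n-1}c_i\,{\rm cir}_{\boldsymbol{\lambda}}(E_{i+1})=0$, and the linear independence of the basis forces every $c_i=0$, so no nonzero polynomial of degree less than $n$ annihilates $T_{\boldsymbol{\lambda}}$. Hence the monic generator of $\ker\varphi$ is exactly $x^n-\boldsymbol{\lambda}(x)$.

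Combining the two computations, the first isomorphism theorem for $\mathbb{F}_q$-algebras yields $\mathbb{F}_q[x]/\langle x^n-\boldsymbol{\lambda}(x)\rangle\cong {\rm im}\,\varphi= {\rm Cir}_{n,\boldsymbol{\lambda}}(\mathbb{F}_q)$, and since $\varphi$ preserves the $\mathbb{F}_q$-action and the identity this is an isomorphism of commutative algebras. The main obstacle I anticipate is the kernel computation, specifically the clean evaluation $\rho_{\boldsymbol{\lambda}}^n(E_1)=\boldsymbol{\lambda}$ that pins down $x^n-\boldsymbol{\lambda}(x)$ as the annihilating relation; once that is in hand, minimality is immediate from the already-established basis, and the remainder is standard first-isomorphism-theorem bookkeeping.
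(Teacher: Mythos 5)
Your proposal is correct, but it takes a genuinely different route from the paper's. The paper constructs the isomorphism directly on the matrix side: it defines $\varphi\colon {\rm Cir}_{n,\boldsymbol{\lambda}}(\mathbb{F}_q)\to \mathbb{F}_q[x]/\langle x^n-\boldsymbol{\lambda}(x)\rangle$ by sending ${\rm cir}_{\boldsymbol{\lambda}}(a_0,\dots,a_{n-1})$ to $\sum_{i=0}^{n-1} a_ix^i+\langle x^n-\boldsymbol{\lambda}(x)\rangle$, asserts that this is an additive group isomorphism, and then verifies scalar compatibility and multiplicativity by explicit double-sum computations that reuse the product formula (\ref{eq:mul}) from Theorem \ref{thm:2.8} together with Lemma \ref{lem:2.4}. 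You go in the opposite direction: the evaluation homomorphism $f\mapsto f(T_{\boldsymbol{\lambda}})$ out of $\mathbb{F}_q[x]$, whose multiplicativity is automatic, with the work relocated to computing the image (Corollary \ref{cor:2.6} plus Lemma \ref{lem:2.4}$(ii)$) and the kernel (the observation $\rho_{\boldsymbol{\lambda}}^n(E_1)=\rho_{\boldsymbol{\lambda}}(E_n)=\boldsymbol{\lambda}$, hence $T_{\boldsymbol{\lambda}}^n=\boldsymbol{\lambda}(T_{\boldsymbol{\lambda}})$, with minimality forced by linear independence of the ${\rm cir}_{\boldsymbol{\lambda}}(E_i)$). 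Your route buys two things. First, it makes rigorous a point the paper glosses over: in the paper's multiplicativity computation, the step $\varphi\bigl({\rm cir}_{\boldsymbol{\lambda}}(\rho_{\boldsymbol{\lambda}}^{i+j}(E_1))\bigr)=x^{i+j}+\langle x^n-\boldsymbol{\lambda}(x)\rangle$ for exponents $i+j\geq n$ is exactly the claim that reduction modulo $x^n-\boldsymbol{\lambda}(x)$ tracks iterated application of $\rho_{\boldsymbol{\lambda}}$, which the paper never justifies; your kernel computation is precisely that justification. Second, your argument re-derives the commutativity of ${\rm Cir}_{n,\boldsymbol{\lambda}}(\mathbb{F}_q)$ (Theorem \ref{thm:2.8}) for free, as the image of a commutative ring under a ring homomorphism. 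What the paper's route buys in exchange is concreteness: it exhibits the isomorphism explicitly on matrices, stays at the level of the basis $\{{\rm cir}_{\boldsymbol{\lambda}}(E_1),\dots,{\rm cir}_{\boldsymbol{\lambda}}(E_n)\}$ already developed in Section 2, and needs neither the first isomorphism theorem nor minimal-polynomial theory.
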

\begin{proof}
Defined $\varphi :{\rm
Cir}_{n,\boldsymbol{\lambda}}(\mathbb{F}_q)\rightarrow
\mathbb{F}_q[x]/\langle x^n-\boldsymbol{\lambda}(x)\rangle $ by
\[ {\rm cir}_{ \boldsymbol{\lambda}}(a_0,a_1,\dots, a_{n-1})
\mapsto \displaystyle\sum_{i=0}^{n-1} a_ix^i+\langle
x^n-\boldsymbol{\lambda}(x)\rangle.\]It is easily seen that
$\varphi$ is an additive group isomorphism. Let $(a_0,a_1,\dots,
a_{n-1}),$ $(b_0,b_1,\dots, b_{n-1})\in \mathbb{F}_q^n$ and
$\alpha\in \mathbb{F}_q$. Then
\begin{eqnarray}\label{eq:scalarmul}
 & \varphi(\alpha {\rm cir}_{ \boldsymbol{\lambda}} (a_0,a_1,\dots, a_{n-1}))
  &=\varphi( {\rm cir}_{ \boldsymbol{\lambda}}(\alpha a_0,\alpha a_1,\dots, \alpha  a_{n-1}))\notag\\
  &&=\displaystyle\sum_{i=0}^{n-1}\alpha a_ix^i +\langle
x^n-\boldsymbol{\lambda}(x)\rangle\notag\\
  &&=\alpha\left(\displaystyle\sum_{i=0}^{n-1} a_ix^i +\langle
x^n-\boldsymbol{\lambda}(x)\rangle\right)\notag\\
 &&=\alpha  \varphi( {\rm cir}_{ \boldsymbol{\lambda}} (a_0,a_1,\dots,
 a_{n-1})).
\end{eqnarray}
By (\ref{eq:mul}) and Lemma \ref{lem:2.4}, we then have
\begin{eqnarray*}
&&\varphi({\rm cir}_{ \boldsymbol{\lambda}}( a_0,a_1,\dots,
a_{n-1} )  {\rm cir}_{ \boldsymbol{\lambda}}( b_0,b_1,\dots,
b_{n-1})
)\\&&~~~~~~~~~~~~~~=\varphi(\displaystyle\sum_{i=0}^{n-1}\displaystyle\sum_{j=0}^{n-1}a_ib_j
T^{i+j}),~~~~~~~~~~~~~~~~~~~~\text{by
 (\ref{eq:mul})},\\
 &&~~~~~~~~~~~~~~=\varphi(\displaystyle\sum_{i=0}^{n-1}\displaystyle\sum_{j=0}^{n-1}a_ib_j{\rm
 cir}_{\boldsymbol{\lambda}}(\rho_{\boldsymbol{\lambda}}^{i+j}(E_{1}))), ~~~~~~~\text{by Lemma \ref{lem:2.4},}\\
 &&~~~~~~~~~~~~~~= \displaystyle\sum_{i=0}^{n-1}\displaystyle\sum_{j=0}^{n-1}a_ib_j\varphi({\rm
 cir}_{\boldsymbol{\lambda}}(\rho_{\boldsymbol{\lambda}}^{i+j}(E_{1}))),~~~~~~~\text{by
 (\ref{eq:scalarmul})}, \\
 &&~~~~~~~~~~~~~~= \displaystyle\sum_{i=0}^{n-1}\displaystyle\sum_{j=0}^{n-1}a_ib_j x^{i+j}+\langle x^n-\boldsymbol{\lambda}(x)\rangle\\
&&~~~~~~~~~~~~~~=\left(\displaystyle\sum_{i=0}^{n-1} a_ix^i
+\langle
x^n-\boldsymbol{\lambda}(x)\rangle\right)\left(\displaystyle\sum_{j=0}^{n-1}
a_jx^j +\langle x^n-\boldsymbol{\lambda}(x)\rangle\right)\\
 &&~~~~~~~~~~~~~~= \varphi({\rm
cir}_{ \boldsymbol{\lambda}}( a_0,a_1,\dots, a_{n-1} ) )
\varphi({\rm cir}_{ \boldsymbol{\lambda}}( b_0,b_1,\dots, b_{n-1})
).
\end{eqnarray*}
This completes the proof.
\end{proof}

\section{Vector-Circulant Based Additive Codes
over~$\mathbb{F}_4$}

In this section, we restrict our study to the finite field of $4$
elements $\mathbb{F}_4 =\{0,1,\alpha,\alpha^2=1+\alpha\}$.
 A code  of length $n$ over
$\mathbb{F}_4$ is defined to be a non-empty subset of
$\mathbb{F}_4^n$. A code $C$ is said to be additive if it is an
additive subgroup of the additive group $(\mathbb{F}_4^n,+)$.
Throughout, every code is assume to be additive. It is know [5]
that $C$ contains $2^k$ codewords for some $0 \leq k \leq 2n$, and
can be defined by a $k \times n$ generator matrix, with entries
from $\mathbb{F}_4$, whose rows span $C$ additively. We regard an
additive code of length $n$ over $\mathbb{F}_4$ containing $2^k$
codewords as an $(n, 2^k)$ code.
 The Hamming weight of $v \in
\mathbb{F}_4^n$, denoted ${\rm wt}(\boldsymbol{v})$, is defined to
be the number of nonzero components of $v$. The Hamming distance
between $u\neq v\in \mathbb{F}_4^n$ is defined as ${\rm wt}(u -
v)$. The minimum distance of the code $C$, denoted by $d(C)$, is
the minimal Hamming distance between any two distinct codewords of
$C$. As  $C$ is additive, the minimum distance equals the smallest
nonzero weight of any codewords in $C$. An $(n,2^k)$ code with
minimum distance $d$ is called an $(n, 2^k, d)$ code.

We focus on a construction of additive codes for the particular
case $k=n$, or half-rate codes, or equivalently,   $(n,2^n)$
codes. It follows from the Singleton bound [2] that any half-rate
additive code over $\mathbb{F}_4$ must satisfy
\[ d\leq \left\lfloor
\frac{n}{2}\right\rfloor +1.\] An $(n,2^n)$  code $C$ is said to
be  \textit{extremal} if it attains the equality in the Singleton
bound, and \textit{near-extremal} if it has minimum distance
$\left\lfloor \frac{n}{2}\right\rfloor$.

 Given $\boldsymbol{\lambda}\in \mathbb{F}_4^n$, a $\boldsymbol{\lambda}$-\textit{vector-circulant based additive code} is defined to be an additive code
generated by a $\boldsymbol{\lambda}$-circulant generator matrix
of the following form:
\begin{eqnarray*}
\left[%
\begin{array}{r }
  a_0~~a_1 ~~ \cdots ~~ a_{n-1}~ \\
\rho_{\boldsymbol{\lambda}}(a_0~~a_1 ~~ \cdots ~~ a_{n-1} )\\
 \rho_{\boldsymbol{\lambda}} ^2(a_0~~a_1 ~~ \cdots ~~ a_{n-1} )\\
\vdots~~~~~~~~~~~~~\\
 \rho_{\boldsymbol{\lambda}} ^{n-1}(a_0~~a_1 ~~ \cdots ~~ a_{n-1} )\\
\end{array}%
\right]=:{\rm cir}_{\boldsymbol{\lambda}}(a_0,a_1,\dots,a_{n-1})
\end{eqnarray*}
Such a code  is called a circulant based additive code if
$\boldsymbol{\lambda}=(1,0,\dots,0)$ and it is called a
$\alpha$-twistulant based additive code if
$\boldsymbol{\lambda}=(\alpha,0,\dots,0)$.

An advantage of this construction is that there are typically much
more   additive codes than circulant based or twistulant based
additive codes [5].

We implement  a   procedure in the computer algebra system Magma
[1] to construct   vector-circulant based  additive codes. Based
on this construction, we search for     half-rate additive codes
with highest minimum distances of length up to $13$. The result is
shown in Table~\ref{tb1};  codes of length $2$ to $7$ are extremal
and codes of length $8$ to $13$ are near-extremal.

\begin{table}[H]
  \centering  \caption{Vector-circulant based  $(n,2^n)$   codes $C$ over  $\mathbb{F}_4 =\{0,1,\alpha,\alpha^2=1+\alpha\}$   { generated by }  ${\rm cir} _{\boldsymbol{\lambda}}(\boldsymbol{v}) $ with highest minimum distances}\label{tb1}
  \begin{tabular}{ lllc }
    \hline
    \hline
    $n$  & $\boldsymbol{\lambda}$ & $\boldsymbol{v}$ &  $d(C)$  \\
    \hline
    $2$&$(  1  , 1)$                                                            &$(  \alpha ,  1)$& $2$\\
    $3$&$(  1  , 0 ,  \alpha) $                                                 &$(  \alpha ,  1 ,  1) $& $2$\\
    $4$&$(  1,   0,   0,   1) $                                                 &$(  1,   \alpha,   1,   1)$& $3$\\
    $5$&$(  1,   0,   0,   0,   \alpha) $                                       &$(  1,   0,   \alpha,   1,   1) $& $3$\\
    $6$&$(  1,   0,   0,   0,   0,   0) $                                       &$(  \alpha, \alpha^2,   \alpha,   1,   1,   1) $& $4$\\
    $7$&$(  1,   0,   1,   0,   0,   0,   0) $                                  &$(  0,   1,   \alpha,   1,   1,   1,   1) $& $4$\\
    $8$&$(  1,   0,   0,   0,   0,   0,   0,   \alpha) $                        &$(  0,   \alpha, \alpha^2, \alpha^2,   1,   1,   1,   1) $& $4$\\
    $9$&$(  1,   0,   0,   0,   0,   0,   0,   0,   1) $                        &$(a^2,   \alpha,   1,   1,   1,   1,   1,   1,   1)$& $4$\\
    $10$&$(  1,   0,   0,   0,   0,   0,   0,   0,   0,   0) $                  &$(  0,   \alpha,   \alpha,   1,   \alpha,   1,   1,   1,   1,   1) $& $5$\\
    $11$&$(  1,   0,   0,   0,   1,   0,   0,   0,   0,   0,   0) $             &$(  0,   \alpha, \alpha^2,   \alpha,   1,   1,   1,   1,   1,   1,   1) $& $5$\\
    $12$&$(  1,   0,   0,   0,   0,   0,   0,   0,   0,   0,   0,   0) $        &$(  0,   1, \alpha^2, \alpha^2,   1,   \alpha,   1,   1,   1,   1,   1,   1) $& $6$\\
    $13$&$(  1,   0,   0,   0,   0,   0,   0,   0,   0,   0,   0,   0,   0) $   &$(  0,   \alpha, \alpha^2,   1,   1,   \alpha,   1,   1,   1,   1,   1,   1 ,  1) $& $6$\\

    \hline
  \end{tabular}
 \end{table}


\end{document}